\tikzset{> =stealth}
\tikzset{normalHead/.tip={Triangle[open,angle=60:4pt]},}
\tikzset{normalTail/.tip={Triangle[reversed,open,angle=60:4pt]},}
\theoremstyle{plain}
\newtheorem{theorem}{Theorem}[section]
\newtheorem{proposition}[theorem]{Proposition}
\theoremstyle{definition}
\newtheorem{definition}[theorem]{Definition}
\newtheorem{example}{Example}
\theoremstyle{remark}
\renewcommand{\epsilon}{\varepsilon}
\renewcommand{\phi}{\varphi}
\renewcommand{\O}{\mathcal{O}}
\newcommand{\inv}{^{-1}}
\newcommand{\WSExt}{\mathrm{WSExt}}
\newcommand{\WAct}{\mathrm{WAct}}
\newcommand{\Mon}{\mathrm{\textbf{Mon}}}
\newcommand{\Act}{\mathrm{Act}}
\DeclareMathOperator{\Gl}{Gl}
\newcommand{\splitext}[6]{
\tikz[baseline]{
\newdimen{\mylabelwidth}
\newdimen{\skipwidth}
\node[anchor=base] (A) {\hspace*{\dimexpr0.5pt-\pgfkeysvalueof{/pgf/inner xsep}}${#1}$};
\settowidth{\mylabelwidth}{\pgfinterruptpicture {$#2$} \endpgfinterruptpicture}
\pgfmathsetlength{\skipwidth}{max(\mylabelwidth,10pt)}
\node[right] (B) at ([xshift=\skipwidth+12pt]A.east) {${#3}$};
\settowidth{\mylabelwidth}{\pgfinterruptpicture {$#4$} \endpgfinterruptpicture}
\settowidth{\skipwidth}{\pgfinterruptpicture {$#5$} \endpgfinterruptpicture}
\pgfmathsetlength{\skipwidth}{max(\skipwidth,\mylabelwidth,10pt)}
\node[right] (C) at ([xshift=\skipwidth+12pt]B.east) {${#6}$\hspace*{\dimexpr0.5pt-\pgfkeysvalueof{/pgf/inner xsep}}};
\draw[normalTail->] (A) to node [above] {${#2}$} (B);
\draw[transform canvas={yshift=0.5ex},-normalHead] (B) to node [above] {${#4}$} (C);
\draw[transform canvas={yshift=-0.5ex},->] (C) to node [below] {${#5}$} (B);
}}
\title{$\lambda$-Semidirect Products of Inverse Monoids are Weakly Schreier Extensions}
\author[P. F. Faul]{Peter F. Faul}
\address{Department of Pure Mathematics and Statistical Sciences\\ University of Cambridge}
\email{peter@faul.io}
\date{\today}
\subjclass[2010]{20M50, 18G50.}
\keywords{semigroup, Artin gluing, protomodular, monoid extension, lambda-semidirect products.} 
\begin{document}

\maketitle

\begin{abstract}
A split extension of monoids with kernel $k \colon N \to G$, cokernel $e \colon G \to H$ and splitting $s \colon H \to G$ is weakly Schreier if each element $g \in G$ can be written $g = k(n)se(g)$ for some $n \in N$. The characterization of weakly Schreier extensions allows them to be viewed as something akin to a weak semidirect product.
The motivating examples of such extensions are the Artin glueings of topological spaces and, of course, the Schreier extensions of monoids which they generalise.
In this paper we show that the $\lambda$-semidirect products of inverse monoids are also examples of weakly Schreier extensions.
The characterization of weakly Schreier extensions sheds some light on the structure of $\lambda$-semidirect products. The set of weakly Schreier extensions between two monoids comes equipped with a natural poset structure, which induces an order on the set of $\lambda$-semidirect products between two inverse monoids.
We show that Artin glueings are in fact $\lambda$-semidirect products and inspired by this identify a class of Artin-like $\lambda$-semidirect products. We show that joins exist for this special class of $\lambda$-semidirect product in the aforementioned order.
\end{abstract}

\section{Introduction}\label{sec:Introduction}
The ideas underlying the semidirect product of groups can be adapted to a number of structures. One such example is the context of semigroups wherein an action of semigroups $\alpha \colon H \times N \to N$ gives rise to a semidirect product $N \ltimes_\alpha H$ defined just as in the group case. These semidirect products have found much use in semigroup theory, for instance they provide some insight into the structure of inverse semigroups \cite{mcalister1974groups}. However, when applied naively to two inverse semigroups, this semidirect product does not in general yield an inverse semigroup. To remedy this Billhardt introduced a related notion called a $\lambda$-semidirect product \cite{billhardt1992wreath}. Given inverse semigroups $N$ and $H$ the idea is to use an action of $H$ on $N$ to equip a certain \emph{subset} of $N \times H$ (determined by the action) with a multiplication turning it into an inverse semigroup. These $\lambda$-semidirect products have since granted insight into the structure of inverse semigroups

Semigroups are not the only context in which a generalization of the semidirect product can be considered. For monoids $N$ and $H$, an action $\alpha \colon H \times N \to N$ of $H$ on $N$ gives rise to a semidirect product $N \ltimes_\alpha H$ defined just as expected. In this context, work has been done to relate these semidirect products to split extensions \cite{martins2013semidirect}. Unlike in the case of groups were there is a one-to-one correspondence between semidirect and split extensions, these semidirect products of monoids correspond to only the Schreier extensions of monoids --- those split extensions $\splitext{N}{k}{G}{e}{s}{H}$ in which for each $g \in G$ there exists a unique $n \in N$ such that $g = k(n)se(g)$.

The Schreier condition can be weakened and a new class of split extensions considered. Instead of requiring that for each $g \in G$ there is a \emph{unique} $n \in N$, we can merely require that there exists (potentially many) $n \in N$ for which $g = k(n)se(g)$. We call such an extension weakly Schreier and they were first considered in \cite{bourn2015partialMaltsev}. Their characterization in \cite{faul2019characterization} establishes that they resemble something like a weak semidirect product. The primary (non-Schreier) examples of these extensions are the Artin glueings of topological spaces, where the lattice of open sets are viewed as monoids.
\subsection*{Outline}
In this paper we will show that the $\lambda$-semidirect products of inverse monoids are also examples of weakly Schreier extensions. In fact this subsumes are previous example as we show that Artin glueings are in fact examples of $\lambda$-semidirect products. The characterization of weakly Schreier extensions sheds some light on $\lambda$-semidirect products. The set of weakly Schreier extensions between two monoids comes with a natural poset structure, which induces an order on the $\lambda$-semidirect products between two inverse monoids. FThe Artin glueing leads us to define a class of Artin-like $\lambda$-semidirect products. We show that this class is closed under binary joins.

\section{Background}\label{sec:Background}
In this section we give an introduction to the theory of weakly Schreier extensions of monoids, summarising the core results found in \cite{faul2019characterization}. We then outline the basics of inverse semigroups and Billhardt's $\lambda$-semidirect products \cite{billhardt1992wreath} before finally discussing frames and Artin glueings.

\subsection{Weakly Schreier extensions}
The category $\Mon$ of monoids has zero morphisms between objects: the maps sending all elements to the identity. Consequently the kernel (dually the cokernel) of a morphism $f \colon A \to B$ can be defined in this category as the equaliser (dually the coequaliser) of $f$ and the zero morphism from $A$ into $B$. This allows us to define split extensions.

\begin{definition}
    For monoids $H$ and $N$, a \emph{split extension} of $H$ by $N$ is a diagram $\splitext{N}{k}{G}{e}{s}{H}$ in which $k$ is the kernel of $e$, $e$ is the cokernel of $k$, and $s$ is a section of $e$.
\end{definition}

Split extensions are often considered in the category of groups where it is well known that they correspond to semidirect products. In $\Mon$ this relationship is not so simple. The appropriate notion of an action in $\Mon$ gives rise to a monoidal notion of a semidirect product. However, as discussed above, these are only in bijection with what are known as \emph{Schreier split extensions}. 

While this gives good motivation for these split extensions, one may ask if they are the only split extensions in $\Mon$ worth considering. As described in \cite{faul2019artin}, there is a related class of split extensions that arises naturally from the world of topology. That specific example will be discussed in \cref{subsec:Artin}, but for now we consider the class of split extensions in question.

\begin{definition}
    A split extension $\splitext{N}{k}{G}{e}{s}{H}$ is \emph{weakly Schreier} if for each $g \in G$ there exists $n \in N$ such that $g = k(n)se(g)$.
\end{definition}

Here, compared to Schreier extensions, we have dropped the uniqueness requirement on the $n \in N$ and this influences our ability to think of them as semidirect products.

Intuitively, when we have a split extension $\splitext{N}{k}{G}{e}{s}{H}$ of groups, there is a bijection between the set $N \times H$ and $G$ which sends the pair $(n,h)$ to $k(n)s(h)$. When we let the set $N \times H$ inherit a multiplication through this bijection we arrive at the associated semidirect product. This is why Schreier extensions are desirable in $\Mon$, as they ensure this bijection exists and allows the same inheritance of multiplication.

In both cases these semidirect products can be characterized by actions which encode this inherited multiplication. If $\alpha \colon H \times N \to N$ is an action of $H$ on $N$, then the multiplication is given by $(n,h) \cdot (n',h') = (n \alpha(h,n'),hh')$.

For weakly Schreier extensions there is no bijection between the elements of $N \times H$ and $G$, instead we have a surjective map sending $(n,h)$ to $k(n)s(h)$. We do, however, get a bijection between $G$ and the quotient of $N \times H$ given by this surjection. This quotient will then inherit a multiplication from $G$ and it is this object that we call the associated \emph{weak semidirect product}.

In terms of characterizing these weak semidirect products, it is no longer enough to just specify an action, as there is now this quotient to consider. In addition, the presence of this quotient means that we no longer need an action to specify the multiplication --- something weaker will do.

\begin{definition}
    For monoids $N$ and $H$, an equivalence relation on $N \times H$ is said to be \emph{admissible} if the following conditions hold.
    \begin{enumerate}
        \item $(n,1) \sim (n',1)$ implies $n = n'$,
        \item $(n,h) \sim (n',h')$ implies $h = h'$,
        \item $(n_1,h) \sim (n_2,h)$ implies $(xn_1,h) \sim (xn_2,h)$ for all $x \in N$,
        \item $(n_1,h) \sim (n_2,h)$ implies $(n_1,hy) \sim (n_2,hy)$ for all $y \in H$.
    \end{enumerate}
\end{definition}

Quotients arising from admissible equivalence relations are precisely the quotients that occur in weak semidirect products.

\begin{definition}
A function $\alpha \colon H \times N \to N$ is an \emph{action} compatible with an admissible equivalence relation $E$ on $N \times H$ if it satisfies the following conditions:

\begin{enumerate}
	\item $(n_1,h) \sim (n_2,h)$ implies $(n_1 \alpha(h,n),h) \sim (n_2 \alpha(h,n),h)$ for all $n \in N$,
	\item $(n,h') \sim (n',h')$ implies $(\alpha(h,n),hh') \sim (\alpha(h,n'),hh')$ for all $h \in H$,
	\item $(\alpha(h,nn'),h) \sim (\alpha(h,n)\cdot\alpha(h,n'),h)$,
	\item $(\alpha(hh',n),hh') \sim (\alpha(h,\alpha(h',n)),hh')$,
	\item $(\alpha(h,1),h) \sim (1,h)$,
	\item $(\alpha(1,n),1) \sim (n,1)$.
\end{enumerate}
\end{definition}

Conditions (3)--(6) are reminiscent of the usual action definition, except that in this case they are only required to hold up to equivalence.

These are essentially all the needed ingredients to characterize weak semidirect products. The only wrinkle is that two actions compatible with an admissible equivalence relation can sometimes induce the same multiplication on the quotient. Thus instead of considering the set $\Act_E(H,N)$ of actions compatible with $E$, we consider a quotient $\Act_E(H,N)/{\sim}$ where two actions $\alpha$ and $\beta$ are equivalent if $(\alpha(h,n),h) \sim (\beta(h,n),h)$ for all $n \in N$ and $h \in H$. See \cite{faul2019characterization} for more details.

\begin{proposition}\label{prop:equivtoschreier}
    For monoids $N$ and $H$, let $E$ be an admissible equivalence relation and $\alpha$ a compatible action. Then $(E,[\alpha])$ corresponds to a weakly Schreier extension
    \[
        \splitext{N}{k}{(N \times H/E, \cdot, [1,1])}{e}{s}{H}
    \]
    in which $k(n) = [n,1]$, $e([n,h]) = h$ and $s(h) = [1,h]$.
    Multiplication is defined as
    \[
        [n,h] \cdot [n',h'] = [n \alpha(h,n'),hh'].
    \]
\end{proposition}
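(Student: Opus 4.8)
The plan is to verify in turn that the given multiplication is well-defined on $N \times H/E$, that it makes this set into a monoid with identity $[1,1]$, that $k$, $e$ and $s$ are homomorphisms realising the kernel, cokernel and section data, and finally that the extension so obtained is weakly Schreier.

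First I would show the formula $[n,h]\cdot[n',h'] = [n\alpha(h,n'),hh']$ is independent of the chosen representatives. Given $(n_1,h)\sim(n_2,h)$ and $(n_1',h')\sim(n_2',h')$, I would change the two factors separately. Changing the second, action condition (2) gives $(\alpha(h,n_1'),hh')\sim(\alpha(h,n_2'),hh')$ and admissibility (3) allows left-multiplication by $n_1$; changing the first, action condition (1) applied with $n = n_2'$ gives $(n_1\alpha(h,n_2'),h)\sim(n_2\alpha(h,n_2'),h)$ and admissibility (4) allows appending $h'$. Transitivity then yields $(n_1\alpha(h,n_1'),hh')\sim(n_2\alpha(h,n_2'),hh')$. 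Well-definedness of $e$ is immediate from admissibility (2), and $k$, $s$ are patently well-defined.

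Next I would check the monoid axioms. The identity laws read $[1,1]\cdot[n,h]=[\alpha(1,n),h]=[n,h]$ (action condition (6), then admissibility (4)) and $[n,h]\cdot[1,1]=[n\alpha(h,1),h]=[n,h]$ (action condition (5), then admissibility (3)). For associativity I would expand both bracketings and show each is equivalent to $(n\alpha(h,n')\alpha(h,\alpha(h',n'')),hh'h'')$: the left bracketing uses action condition (4) to rewrite $\alpha(hh',n'')$, the right bracketing uses action condition (3) to expand $\alpha(h,n'\alpha(h',n''))$, in both cases transported into position by admissibility (3) and (4). A short computation also records the decomposition $[n,1]\cdot[1,h]=[n,h]$, which is the workhorse of the remaining steps.

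I would then treat the structural maps. Direct calculations (again using conditions (5) and (6)) show $k$ and $s$ are homomorphisms, $e$ is a homomorphism by construction, and $e(s(h))=h$. That $k$ is the kernel of $e$ follows once one notes that the fibre $e^{-1}(1)$ is exactly the image of $k$ and that $k$ is injective by admissibility (1); the equaliser universal property is then automatic. For the cokernel I would argue through congruences: $e$ is surjective with associated congruence identifying $[n,h]$ and $[n',h']$ exactly when $h=h'$, and any congruence $\theta$ collapsing $k(N)$ to the identity satisfies $[n,h]=[n,1]\cdot[1,h]\equiv_\theta[1,h]$, hence contains the congruence of $e$; thus $e$ is the cokernel. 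Finally, weak Schreier-ness is witnessed by the same decomposition, since $k(n)\,s(e([n,h]))=[n,1]\cdot[1,h]=[n,h]$. The main obstacle is the cokernel step, as it is the only part needing the universal property via congruences rather than a direct element calculation; the associativity verification, though the longest, is purely mechanical once the correct instances of the conditions are selected.
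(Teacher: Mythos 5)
Your proposal is correct and complete: the well-definedness, monoid axioms, homomorphism checks, kernel/cokernel arguments and the weakly Schreier condition are all verified with the right instances of the admissibility and action conditions. Note that the paper itself states this proposition without proof, deferring to the cited characterization paper \cite{faul2019characterization}, so there is no in-text argument to compare against; your verification is the standard one and I see no gaps.
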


Similarly we can consider a reverse of this process.

\begin{proposition}\label{prop:schreiertoequiv}
    For monoids $N$ and $H$, let $\splitext{N}{k}{G}{e}{s}{H}$
    be a weakly Schreier extension and let $q \colon G \to N$ be a function such that $g = kq(g)se(g)$. Then we can associate to this a pair $(E,[\alpha])$ where $E$ is an admissible equivalence relation defined by
    \[
        (n,h) \sim (n',h') \iff k(n)s(h) = k(n')s(h').
    \]
    and $\alpha$ is a compatible action defined by
    \[
        \alpha(h,n) = q(s(h)k(n)).
    \]
\end{proposition}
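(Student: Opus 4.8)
The plan is to verify directly that the relation $E$ is an admissible equivalence relation and that $\alpha$ is an action compatible with it; the pair $(E,[\alpha])$ is then the claimed associate. Throughout I will write $\phi \colon N \times H \to G$ for the map $\phi(n,h) = k(n)s(h)$, so that $(n,h) \sim (n',h')$ holds precisely when $\phi(n,h) = \phi(n',h')$. Being the kernel pair of $\phi$, the relation $E$ is automatically an equivalence relation, and the weakly Schreier hypothesis guarantees $\phi$ is surjective (each $g$ equals $\phi(q(g),e(g))$).

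First I would dispatch admissibility. Condition (1) follows because $s(1) = 1$ reduces $(n,1)\sim(n',1)$ to $k(n) = k(n')$, and $k$, being a kernel, is a monomorphism and hence injective. For condition (2), applying $e$ to $k(n)s(h) = k(n')s(h')$ and using that $ek$ is trivial together with $es = \id_H$ yields $h = h'$. Conditions (3) and (4) are immediate from the fact that $k$ and $s$ are homomorphisms: left-multiplying the witnessing equality $k(n_1)s(h) = k(n_2)s(h)$ by $k(x)$ gives (3), and right-multiplying by $s(y)$ gives (4).

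The heart of the argument is a single identity. Since $e(s(h)k(n)) = h$, the defining property of $q$ applied to $g = s(h)k(n)$ gives
\[
    k(\alpha(h,n))\,s(h) = s(h)\,k(n),
\]
which says that $s(h)$ may be slid past $k(\alpha(h,n))$ at the cost of replacing $\alpha(h,n)$ by $n$. Because $(a,h)\sim(b,h)$ is just $k(a)s(h) = k(b)s(h)$, each of the six action-compatibility conditions becomes an equality in $G$, and this identity reduces every such equality to one of the hypotheses. For instance, in (1) both sides rewrite as $k(n_i)s(h)k(n)$; in (3) and (4) the identity is applied twice to collapse both sides to $s(h)k(n)k(n')$ and $s(h)s(h')k(n)$ respectively; while (5) and (6) follow by taking $n = 1$ and $h = 1$. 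Condition (2) and the remaining cases are analogous, using the homomorphism property $s(hh') = s(h)s(h')$ to split the second coordinate.

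The computations themselves are routine once the sliding identity is in place, so the only real obstacle is organisational: one must keep careful track of which side of the witnessing equation to multiply on and apply the identity in the correct order, particularly in the associativity condition (4) where it is used on the nested expression $k(\alpha(h,\alpha(h',n)))s(h)$. I expect no conceptual difficulty beyond recognising that the defining equation $g = kq(g)se(g)$ is exactly what produces the sliding identity.
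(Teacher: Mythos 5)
Your argument is correct and is essentially the standard verification: the paper itself omits the proof of this proposition (deferring to the characterization paper it cites), and the direct check you give --- admissibility from injectivity of $k$, the splitting $es = \id_H$, and the homomorphism properties, followed by reducing all six compatibility conditions to the sliding identity $k(\alpha(h,n))s(h) = s(h)k(n)$ obtained from $g = kq(g)se(g)$ applied to $g = s(h)k(n)$ --- is exactly the intended argument. All the individual reductions you sketch (e.g.\ both sides of condition (3) collapsing to $s(h)k(n)k(n')$ and of condition (4) to $s(h)s(h')k(n)$) check out.
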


Notice that there must exist such a map $q$ by virtue of the split extension being weakly Schreier (using the axiom of choice). The choice of $q$ does not end up mattering.

These two processes are inverses of one another up to isomorphism. Let us discuss what the morphisms in question are.

\begin{definition}
    A morphism of weakly Schreier extensions is a monoid homomorphism $f \colon G_1 \to G_2$ such that the three squares in the following diagram commute.
    
    \begin{center}
        \begin{tikzpicture}[node distance=1.5cm, auto]
            \node (A) {$N$};
            \node (B) [right of=A] {$G_1$};
            \node (C) [right of=B] {$H$};
            \node (D) [below of=A] {$N$};
            \node (E) [right of=D] {$G_2$};
            \node (F) [right of=E] {$H$};
            \draw[normalTail->] (A) to node {$k_1$} (B);
            \draw[transform canvas={yshift=0.5ex},-normalHead] (B) to node {$e_1$} (C);
            \draw[transform canvas={yshift=-0.5ex},->] (C) to node {$s_1$} (B);
            \draw[normalTail->] (D) to node {$k_2$} (E);
            \draw[transform canvas={yshift=0.5ex},-normalHead] (E) to node {$e_2$} (F);
            \draw[transform canvas={yshift=-0.5ex},->] (F) to node {$s_2$} (E);
            \draw[->] (B) to [swap] node {$f$} (E);
            \draw[double equal sign distance] (A) to (D);
            \draw[double equal sign distance] (C) to (F);
        \end{tikzpicture}
    \end{center}
\end{definition}

If $f \colon G_1 \to G_2$ is such a morphism, then for all $g \in G$, there exists an $n \in N$ such that $f(g) = f(k_1(n) \cdot s_1e_1(g)) = fk_1(n) \cdot fs_1(e_1(g)) = k_2(n) \cdot s_2(e_1(g))$. This means that any morphism between two weakly Schreier extensions must be unique.

\begin{proposition}
    The category $\WSExt(H,N)$ of weakly Schreier extensions between $N$ and $H$ and morphisms of weakly Schreier extensions is a preorder.
\end{proposition}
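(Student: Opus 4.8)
The plan is to unwind what ``preorder'' means for a category and reduce the statement to two ingredients: that $\WSExt(H,N)$ is genuinely a category, and that between any two of its objects there is at most one morphism. Recall that a category is a preorder precisely when all of its hom-sets have at most one element, so once the categorical structure is confirmed, the uniqueness observation already recorded just above the statement does the rest.

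First I would check that $\WSExt(H,N)$ really forms a category, inheriting its structure from $\Mon$. A morphism is a monoid homomorphism $f \colon G_1 \to G_2$ subject to the three commuting-square conditions $f k_1 = k_2$, $e_2 f = e_1$ and $f s_1 = s_2$. The identity homomorphism $\id_{G}$ clearly satisfies these (each square becomes a triviality), so identities exist. For composition, if $f \colon G_1 \to G_2$ and $g \colon G_2 \to G_3$ are morphisms, then $g f$ is again a monoid homomorphism and the three squares paste: $(g f) k_1 = g k_2 = k_3$, $e_3 (g f) = e_2 f = e_1$, and $(g f) s_1 = g s_2 = s_3$. Associativity and the unit laws are then immediate, being exactly those of $\Mon$.

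Next I would invoke the uniqueness argument preceding the statement. For objects $G_1, G_2$ and any $g \in G_1$, the weakly Schreier condition supplies $n \in N$ with $g = k_1(n) s_1 e_1(g)$; applying any morphism $f$ and using the square conditions gives $f(g) = k_2(n) s_2(e_1(g))$, an expression that refers only to the fixed data of the two extensions and the chosen $n$, not to $f$ itself. Hence any two morphisms $G_1 \to G_2$ agree at every $g$ and so coincide, which is precisely the statement that each hom-set has at most one element. Combining this with the previous paragraph yields that $\WSExt(H,N)$ is a preorder.

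I do not anticipate a genuine obstacle here, since the only delicate point—that the formula for $f(g)$ is independent of both the morphism and the (non-unique) choice of $n$—is exactly what the weakly Schreier condition provides and has already been observed. If anything, the one thing worth stating carefully is why the non-uniqueness of $n$ causes no trouble: for a \emph{fixed} $g$ one may select a single witness $n$ and feed it to every competing morphism simultaneously, so the comparison $f(g) = k_2(n) s_2(e_1(g)) = f'(g)$ never requires the two morphisms to make the same choice independently.
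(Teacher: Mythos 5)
Your proposal is correct and follows essentially the same route as the paper: the paper's argument is precisely the computation $f(g) = f(k_1(n)\,s_1e_1(g)) = k_2(n)\,s_2(e_1(g))$, which shows $f(g)$ is determined by the extension data alone, so any two parallel morphisms coincide. Your extra care in verifying the categorical structure and in noting that a single witness $n$ can be used for all competing morphisms is sound but not a departure from the paper's reasoning.
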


Inspired by the above we can define an order relation on our pairs $(E,[\alpha])$. Let $\splitext{N}{k_1}{G_1}{e_1}{s_1}{H}$ and $\splitext{N}{k_1}{G_1}{e_1}{s_1}{H}$ be weakly Schreier extensions and $f \colon G_1 \to G_2$ a morphism between them. Let $E_1$ and $E_2$ be the respective admissible equivalence relations and $q_1$ and $q_2$ associated Schreier retractions. Then our above calculation implies that $f([n,h]_{E_1}) = [n,h]_{E_2}$.
This will only be well-defined when $(n,h) \sim_{E_1} (n',h)$ implies $(n,h) \sim_{E_2} (n',h)$. The fact that $f$ must preserve multiplication is equivalent to the statement that $(q_1(s_1(h)(k_1(n))),h) \sim_{E_2} (q_2(s_2(h)k_2(n)),h)$.
Thus we can define the order as follows.

\begin{definition}
    Let $\WAct(H,N)$ have as objects pairs $(E,[\alpha])$ where $E$ is an admissible equivalence relation on $N \times H$ and $\alpha$ is a compatible action. Then we say $(E_1,[\alpha_1]) \leq (E_2,[\alpha_2])$ if and only if $(n,h) \sim_{E_1} (n',h)$ implies $(n,h) \sim_{E_2} (n',h)$ and for all $n \in N$ and $h \in H$ $(\alpha_1(h,n),h) \sim_{E_2} (\alpha_2(h,n),h)$.
\end{definition}

Using the transformations provided in \cref{prop:equivtoschreier} and \cref{prop:schreiertoequiv} we get the following equivalence.

\begin{theorem}
    The categories $\WSExt(H,N)$ and $\WAct(H,N)$ are equivalent.
\end{theorem}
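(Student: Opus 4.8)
The plan is to use the two constructions of \cref{prop:equivtoschreier} and \cref{prop:schreiertoequiv} as the object parts of a pair of functors $\Phi \colon \WAct(H,N) \to \WSExt(H,N)$ and $\Psi \colon \WSExt(H,N) \to \WAct(H,N)$, and to show that they witness the desired equivalence. Since both categories are preorders --- $\WSExt(H,N)$ by the preceding proposition, and $\WAct(H,N)$ since it carries an explicit order --- a functor between them is just a monotone map, every natural transformation is trivially natural, and two objects are isomorphic precisely when they are related by $\leq$ in both directions. Hence it suffices to verify four things: that $\Phi$ and $\Psi$ are monotone, and that each round trip $\Psi\Phi$ and $\Phi\Psi$ returns an object isomorphic to its input.

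For the round trip $\Psi\Phi$ I would begin with a pair $(E,[\alpha])$, form the extension of \cref{prop:equivtoschreier} on $N \times H/E$, and take the Schreier retraction $q([n,h]) = n$. To see this is valid one computes $k(n)se([n,h]) = [n,1][1,h] = [n\alpha(1,1),h]$ and uses admissibility condition (1) together with action condition (5) at $h=1$ to get $\alpha(1,1)=1$, so that $k(n)se([n,h]) = [n,h]$. Feeding this retraction into \cref{prop:schreiertoequiv}, the recovered relation is $(n,h) \sim (n',h') \iff [n,h] = [n',h']$, which is exactly $E$, and the recovered action is $q([1,h][n,1]) = q([\alpha(h,n),h]) = \alpha(h,n)$, so $(E,[\alpha])$ is returned on the nose; thus $\Psi\Phi$ is literally the identity on objects.

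For the round trip $\Phi\Psi$ I would start from a weakly Schreier extension $\splitext{N}{k}{G}{e}{s}{H}$, extract $(E,[\alpha])$ via \cref{prop:schreiertoequiv}, rebuild the extension on $N \times H/E$, and exhibit the comparison map $\phi([n,h]) = k(n)s(h)$ into $G$. This is well defined and injective directly from the definition of $E$, surjective because the weak Schreier property supplies $g = kq(g)se(g)$, and compatible with $k$, $e$ and $s$ by substitution (using $s(1)=1$ and $es = \id$), with inverse again an extension morphism. The one genuinely computational point, which I expect to be the main obstacle, is checking that $\phi$ is a monoid homomorphism. This reduces to the identity $s(h)k(n') = k(\alpha(h,n'))s(h)$, which I would obtain by applying $g = kq(g)se(g)$ to $g = s(h)k(n')$, noting $e(s(h)k(n')) = h$ and $\alpha(h,n') = q(s(h)k(n'))$; substituting this back transforms $k(n)s(h)k(n')s(h')$ into $k(n\alpha(h,n'))s(hh')$, matching $\phi$ of the quotient product.

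Finally, for monotonicity I would unwind the order on $\WAct(H,N)$ and compare it with the existence of a (necessarily unique) morphism in $\WSExt(H,N)$. The order was designed so that $(E_1,[\alpha_1]) \leq (E_2,[\alpha_2])$ holds exactly when the assignment $[n,h]_{E_1} \mapsto [n,h]_{E_2}$ is a well-defined extension morphism: the first clause guarantees well-definedness and the second guarantees preservation of multiplication, so $\Phi$ is monotone by construction. Monotonicity of $\Psi$ follows from the discussion preceding the definition of the order, which shows that any morphism $f$ of extensions forces precisely the two inequalities defining $\leq$ on the associated pairs. Together with the two round trips, this establishes that $\Phi$ and $\Psi$ are mutually inverse monotone maps up to isomorphism, and hence that $\WSExt(H,N)$ and $\WAct(H,N)$ are equivalent.
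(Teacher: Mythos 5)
Your proposal is correct and follows the same route the paper intends: the theorem is stated there as an immediate consequence of \cref{prop:equivtoschreier} and \cref{prop:schreiertoequiv} (with details deferred to the cited characterization paper), and you have simply written out the monotonicity and round-trip verifications that make this precise. The only small imprecision is that $q([n,h]) = n$ requires a choice of representative of the class, and the recovered action agrees with $\alpha$ only up to the equivalence defining $[\alpha]$ --- but your own computation shows the retraction identity holds for any representative, so the argument goes through.
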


\subsection{Inverse semigroups and \texorpdfstring{$\lambda$}{lambda}-semidirect products}
As discussed above, the standard semigroup semidirect product construction, when applied to two inverse semigroups, will not in general return an inverse semigroup. Thus, we study Billhardt's \emph{$\lambda$-semidirect product} \cite{billhardt1992wreath}.
The idea is to consider an algebraic structure on a subset of the product of two inverse semigroups.

\begin{definition}\label{def:action}
    Let $N$ and $H$ be inverse semigroups and let
    $\alpha \colon H \times N \to N$ be a function which we write as $\alpha(h,n) = h \cdot n$. Then $\alpha$ is an \emph{action of inverse semigroups} if the following conditions are satisfied for all $h,h' \in H$ and $n,n' \in N$.
    
    \begin{enumerate}
        \item $h \cdot (nn') = (h \cdot n)(h \cdot n')$,
        \item $hh' \cdot n = h \cdot (h' \cdot n)$.
    \end{enumerate}
\end{definition}

An action could, of course, equivalently be defined as a homomorphism from $H$ into the endomorphisms of $N$.

\begin{definition}\label{def:lambdaSemidirectProducts}
    Let $N$ and $H$ be inverse semigroups and let $H$ act on $N$. Then the $\lambda$-semidirect product associated to this action has as underlying set
    \[
        \{(n,h) \in N \times H : hh\inv \cdot n = n \}
    \]
    and multiplication defined by
    \[
        (n,h)(n',h') = \left(((h_1h_2)(h_1h_2)\inv \cdot n_1)(h_1 \cdot n'),h_1h_2\right)
    \]
\end{definition}

This multiplication resembles the multiplication of the standard semidirect product in a number of ways. The only disagreement is that instead of $(h \cdot n_2)$ being multiplied on the left by $n_1$, it is being multiplied on the left by $(h_1h_2)(h_1h_2)\inv \cdot n_1$.

\subsection{Frames and Artin glueings}\label{subsec:Artin}
A motivating example of weakly Schreier extensions is the Artin glueing of frames. As we shall see in this section, Artin glueings have some interesting parallels to $\lambda$-semidirect products.

A frame is an algebraic structure that captures the lattice of open sets of a topological space. A frame has finite meet operations capturing finite intersections of open sets, and arbitrary joins corresponding to arbitrary unions of opens. Finally we require meets distribute over arbitrary joins. For a more comprehensive look at frames, see \cite{picado2011frames}.

\begin{definition}
    A frame $L$ is a poset with finite meets and arbitrary joins such that finite meets distribute over joins.
\end{definition}

We treat frames as algebraic structures and so the morphisms are just the maps preserving this structure.

\begin{definition}
    A morphism $f \colon L \to M$ of frames satisifies
    \begin{enumerate}
        \item $f(0) = 0$,
        \item $f(1) = 1$,
        \item $f(a \wedge b) = f(a) \wedge f(b)$,
        \item $f(\bigvee S) = \bigvee f(S)$.
    \end{enumerate}
\end{definition}

Given a continuous map between two topological spaces, we know that the preimage sends opens to opens and from set theoretic properties of the preimage, preserves the empty set, the whole space, finite intersections and arbitrary unions. That is, the preimage is a frame homomorphism between the corresponding lattices of open sets.

This idea gives rise to a contravariant functor from the category of topological spaces to the category of frames. Furthermore, the category of frames is easily seen to be a subcategory of the category of monoids under binary meet, though not a full one. Thus, we obtain a functor from the category of topological spaces into the category of monoids.

Transporting topological spaces into the category of monoids gives a worthwhile perspective on the well-known Artin glueing construction (see \cite{wraith1974glueing}). An Artin glueing of two topological spaces $N$ and $H$ is a topological space in which $N$ embeds as a closed subspace and $H$ as its open complement. For any two spaces there are in general many distinct Artin glueings and each is determined by a finite-meet-preserving map from $\O(H)$ to $\O(N)$. We present this construction below in the context of frames.

\begin{definition}
    Let $N$ and $H$ be frames and $f \colon H \to N$ be a (finite-)meet-preserving map. Then the Artin glueing $\Gl(f)$ is the frame of pairs $(n,h)$ for which $n \le f(h)$ with componentwise meets and joins.
\end{definition}

In \cite{faul2019artin} it was shown that Artin glueings precisely correspond to the weakly Schreier extensions in the full subcategory of $\Mon$ consisting of frames.

We see immediately some similarities with $\lambda$-semidirect products. The Artin glueing associates to a map an algebraic structure on a subobject of the product. Furthermore, the condition $n \le f(h)$ is equivalent to $n \wedge f(h) = n$. We will revisit this idea in \cref{section:artin-like-actions} where we will show that Artin glueings are in fact $\lambda$-semidirect products.

\section{\texorpdfstring{$\lambda$}{lambda}-Semidirect products of inverse monoids}\label{sec:lambdaSemidirectProductsOfInverseMonoids}
In order to relate the $\lambda$-semidirect product to weakly Schreier extensions of monoids, we must work inside the category of monoids. Thus, in this section we consider only \emph{inverse monoids} --- that is, inverse semigroups with a unit.

In order to consider $\lambda$-semidirect products in this context there is one standard modification that is made to the theory, relating to the definition of an action.

\begin{definition}\label{def:actionOfInverseMonoids}
    Let $N$ and $H$ be inverse monoids and let $\alpha \colon H \times N \to N$ be a function with application written $\alpha(h,n) = h \cdot n$. Then $\alpha$ is an \emph{action of inverse monoids} if it is an action of inverse semigroups and satisifies that for all $n \in N$
    \[
        1 \cdot n = n.
    \]
\end{definition}

Notably, it is not required that $h \cdot 1 = 1$. Thus, the action can equivalently be thought of as a monoid homomorphism into the monoid of semigroup endomorphisms of $N$.

The $\lambda$-semidirect products we consider in this context are only taken with respect to actions of inverse monoids, as these are precisely the actions for which the associated $\lambda$-semidirect product is a monoid. (The pair $(1,1)$ acts as identity.)

\begin{proposition}\label{prop:kes}
    Let $N$ and $H$ be inverse monoids and let $\alpha \colon H \times N \to N$ be an action of inverse monoids. If $N \ltimes_\alpha H$ is the associated $\lambda$-semidirect product, then the following functions are monoid homomorphisms.
    
    \begin{enumerate}
        \item $k \colon N \to N \ltimes_\alpha H$, where $k(n) = (n,1)$,
        \item $e \colon N \ltimes_\alpha H \to H$, where $e(n,h) = h$,
        \item $s \colon H \to N \ltimes_\alpha H$, where $s(h) = (hh\inv \cdot 1,h)$.
    \end{enumerate}
\end{proposition}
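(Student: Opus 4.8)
The plan is to treat the three maps separately, verifying in each case that the map lands in the underlying set $\{(n,h) \in N \times H : hh\inv \cdot n = n\}$, that it preserves the identity $(1,1)$ of $N \ltimes_\alpha H$ (whose monoid structure, asserted in the text, I assume), and that it preserves multiplication. Two elementary facts will be used throughout: idempotents commute in any inverse semigroup, and for every $g \in H$ the element $g \cdot 1$ is idempotent in $N$, since $g \cdot {-}$ is a semigroup endomorphism by the condition $h \cdot (nn') = (h \cdot n)(h \cdot n')$ of \cref{def:action} and $1$ is idempotent in $N$.

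The cokernel $e$ is immediate: it is the projection onto $H$, so it lands in $H$, sends $(1,1)$ to $1$, and the second coordinate of the product formula is exactly $hh'$, giving multiplicativity. For the kernel $k(n) = (n,1)$, well-definedness follows from $1\cdot 1\inv \cdot n = 1 \cdot n = n$ (using $1\inv = 1$ and \cref{def:actionOfInverseMonoids}), and $k(1) = (1,1)$. For multiplicativity one evaluates the product formula at $h = h' = 1$: every action term collapses via $1 \cdot n = n$, yielding $(n,1)(n',1) = (nn',1) = k(nn')$.

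The splitting $s(h) = (hh\inv \cdot 1, h)$ is the substantial case. Writing $e_h = hh\inv$, well-definedness is the identity $e_h \cdot (e_h \cdot 1) = e_h \cdot 1$, which follows from the associativity condition $hh' \cdot n = h \cdot (h' \cdot n)$ of \cref{def:action} together with $e_h e_h = e_h$; and $s(1) = (1 \cdot 1, 1) = (1,1)$. The crux is multiplicativity, i.e.\ the identity
\[
    ((hh')(hh')\inv \cdot (hh\inv \cdot 1))\,(h \cdot (h'h'\inv \cdot 1)) = (hh')(hh')\inv \cdot 1.
\]
Set $f = (hh')(hh')\inv$ and $c = hh'h'\inv$. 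Applying the associativity condition to each factor on the left turns the first into $(f e_h)\cdot 1$ and the second into $c \cdot 1$. A short computation with idempotents in $H$ then gives $f e_h = f$ (because $f \le hh\inv$ in the natural order) and, crucially, $f = cc\inv$, so that the left-hand side becomes $(cc\inv \cdot 1)(c \cdot 1)$.

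The final step is the key lemma $(cc\inv \cdot 1)(c \cdot 1) = cc\inv \cdot 1$, valid for every $c \in H$, which I would prove by
\[
    (cc\inv \cdot 1)(c \cdot 1) = \big(c \cdot (c\inv \cdot 1)\big)(c \cdot 1) = c \cdot (c\inv \cdot 1) = cc\inv \cdot 1,
\]
where the first and last equalities are the associativity condition and the middle equality applies the homomorphism condition of \cref{def:action} in reverse to merge the two factors, the redundant $N$-identity being absorbed. Substituting $c = hh'h'\inv$ gives $cc\inv \cdot 1 = f \cdot 1 = (hh')(hh')\inv \cdot 1$, which is precisely the first coordinate of $s(hh')$, completing the argument. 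The main obstacle is exactly this $s$-computation: one must keep careful track of which $\cdot$ is the action and which is multiplication in $H$, and spot the two idempotent identities $f e_h = f$ and $f = cc\inv$ that reduce the unwieldy left-hand side to the clean key lemma. Everything else is routine.
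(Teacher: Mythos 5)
Your proof is correct and follows essentially the same route as the paper: check well-definedness, unit and multiplicativity for each map, with the only substantial work in the $s$-computation, where both arguments reduce the product to a single action term by applying the homomorphism law $h\cdot(nn') = (h\cdot n)(h\cdot n')$ in reverse. Your packaging via $f = cc\inv$ with $c = hh'h'\inv$ and the lemma $(cc\inv\cdot 1)(c\cdot 1) = cc\inv\cdot 1$ is just a tidier bookkeeping of the paper's identical manipulation.
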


\begin{proof}
    (1) We begin by proving that the function is well defined. This entails showing that $1(1\inv) \cdot n = n$. Since the inverse of $1$ is $1$ we use the fact that $\alpha$ is an action of inverse monoids.
    
    Next observe that
    \begin{align*}
        k(n_1)k(n_1)    &= (n_1,1)(n_2,1) \\
                    &= ((1(1\inv) \cdot n_1)(1 \cdot n_2),1) \\
                    &= (n_1n_2,1) \\
                    &= k(n_1n_2).
    \end{align*}
    It is clear the unit is preserved.
    
    (2) The function is automatically well defined and it is very easy to see that it preserves the multiplication and unit.
    
    (3) Again we begin by proving it is well defined. We must show that $(hh\inv) \cdot (hh\inv \cdot 1) = hh\inv \cdot 1$. This follows from the fact that $\alpha$ is action of semigroups and that $hh\inv$ is an idempotent.
    
    Finally observe the following calculation.
    \begin{align*}
        s(h_1)s(h_2)    &= (h_1h_1\inv \cdot 1, h_1)(h_2h_2\inv \cdot 1, h_2) \\
                    &= (((h_1h_2)(h_1h_2)\inv \cdot h_1h_1\inv \cdot 1)(h_1 \cdot h_2h_2\inv \cdot 1),h_1h_2) \\
                    &= ((h_1h_2h_2\inv h_1\inv h_1h_1\inv \cdot 1)(h_1h_2h_2\inv \cdot 1),h_1h_2) \\
                    &= (h_1h_2h_2\inv \cdot ((h_1\inv h_1h_1\inv \cdot 1)(1)),h_1h_2) \\
                    &= (h_1h_2h_2\inv h_1\inv \cdot 1, h_1h_2) \\
                    &= s(h_1h_2).
    \end{align*}
    Finally, note that $s(1) = (1(1\inv) \cdot 1, 1) = (1 \cdot 1, 1) = (1,1)$, the identity.
\end{proof}

It is apparent that $k$ is the kernel of $e$ and that $s$ splits $e$. Below we show that this diagram is indeed a weakly Schreier extension.

\begin{proposition}
    Let $N$ and $H$ be inverse monoids, $\alpha \colon H \times N \to N$ an action of inverse monoids, $N \ltimes_\alpha H$ the associated $\lambda$-semidirect product and let $k,e$ and $s$ be as in $\cref{prop:kes}$. Then $\splitext{N}{k}{N \ltimes_\alpha H}{e}{s}{H}$ is a weakly Schreier extension.
\end{proposition}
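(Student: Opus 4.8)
The plan is to reduce the statement to its genuinely new content, namely the weakly Schreier condition, since \cref{prop:kes} has already supplied the homomorphisms $k$, $e$, $s$ with $es = \id_H$, and the remaining split-extension data is routine: $k$ is the kernel of $e$ because the fibre of $e$ over $1$ consists exactly of the pairs $(n,1)$, on which the membership constraint $hh\inv \cdot n = n$ reads $1 \cdot n = n$ and so holds automatically. The cokernel property of $e$ I would not check by hand but rather deduce once the weakly Schreier condition is in place: any homomorphism $f$ with $fk$ trivial must send $g = k(m)s(e(g))$ to $fs(e(g))$, so $f = (fs)e$ factors through $e$, uniquely because $e$ is a split epimorphism, hence epic.

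For the substance, I would verify directly that for every $(n,h) \in N \ltimes_\alpha H$ the explicit witness $m = n$ works; note $k(n) = (n,1)$ is always a legitimate element, as observed above. Since $e(n,h) = h$ we have $s(e(n,h)) = (hh\inv \cdot 1, h)$, so I would expand the product $(n,1)(hh\inv \cdot 1, h)$ using the multiplication of \cref{def:lambdaSemidirectProducts}. Its second coordinate is $1 \cdot h = h$, and its first coordinate is $(hh\inv \cdot n)\bigl(1 \cdot (hh\inv \cdot 1)\bigr)$; applying the unit axiom $1 \cdot x = x$ and then the multiplicativity axiom $h' \cdot (xy) = (h' \cdot x)(h' \cdot y)$ of \cref{def:action} collapses this to $hh\inv \cdot (n \cdot 1) = hh\inv \cdot n$.

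The key observation, and the point on which everything turns, is that $(n,h)$ belongs to the underlying set of the $\lambda$-semidirect product precisely because $hh\inv \cdot n = n$; this is exactly the equation that converts the first coordinate $hh\inv \cdot n$ into $n$ and yields $k(n)s(h) = (n,h)$. I do not anticipate a real obstacle here, as the computation is engineered to succeed: the idempotent prefix $hh\inv \cdot (-)$ contributed both by the formula for $s(h)$ and by the leading factor of the multiplication is neutralised exactly by the defining constraint. The only care needed is to keep the two action axioms straight when simplifying the first coordinate, and to remember that the explicit choice $m = n$ sidesteps the appeal to the axiom of choice present in the general \cref{prop:schreiertoequiv}.
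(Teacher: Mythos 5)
Your proposal is correct and follows essentially the same route as the paper: the weakly Schreier condition is verified by the identical computation $k(n)s(h) = ((hh\inv\cdot n)(1\cdot(hh\inv\cdot 1)),h) = (hh\inv\cdot n,h) = (n,h)$ using the membership constraint, and the cokernel property is then deduced exactly as in the paper by factoring $t$ through $\ell = ts$ and invoking that $e$ is a split epimorphism. The only difference is that you spell out the kernel verification (the fibre of $e$ over $1$) which the paper dismisses as apparent; this is a harmless addition.
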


\begin{proof}
    As discussed, it is apparent that $k$ is the kernel and $s$ is the splitting of $e$. Thus, we must only show that $e$ is the cokernel of $k$ and that the weakly Schreier condition holds. We begin with the latter. Let $(n,h) \in N \ltimes_\alpha H$ and consider
    \begin{align*}
        k(n)s(h)    &= (n,1)(hh\inv \cdot 1, h) \\
                    &= ((hh\inv \cdot n)(1 \cdot hh\inv \cdot 1), h) \\
                    &= (hh\inv \cdot n, h) \\
                    &= (n,h).
    \end{align*}
    Here the last line follows because $(n,h)$ was assumed to belong to $S \ltimes_\alpha T$.
    
    To see that $e$ is the cokernel consider a map $t \colon N \ltimes_\alpha H \to X$ such that $tk$ is the zero morphism. We must show that there is a unique map $\ell \colon H \to X$ such that $t = \ell e$.
    
    By the above $t(n,h) = t(k(n)s(h)) = ts(h)$. We then need only observe that for $\ell = ts$ we have $\ell e(n,h) = ts(h)$, as required. Since $e$ has a splitting, it is epic and consequently the map $\ell = ts$ must be unique.
\end{proof}

Since $\lambda$-semidirect products of inverse monoids $N$ and $H$ are weakly Schreier extensions, we can view them instead as a particular admissible quotient of $N \times H$ paired with a compatible action.

\subsection{The admissible quotient and compatible action}

Let $\alpha$ be an action of inverse monoids of $H$ on $N$ and let $\splitext{N}{k}{N \ltimes_\alpha H}{e}{s}{H}$ be the weakly Schreier extension corresponding to the associated $\lambda$-semidirect product. Then two pairs $(n_1,h_1)$ and $(n_1,h_2)$ will be related in the admissible quotient if and only if $k(n_1)s(h_1) = k(n_2)s(h_2)$. This amounts to requiring that $h_1 = h_2$ and that $h_1h_1\inv \cdot n_1 = h_1h_1\inv \cdot n_2$.

\begin{proposition}
    Let $\splitext{N}{k}{N \ltimes_\alpha H}{e}{s}{H}$ be the weakly Schreier extension corresponding to a $\lambda$-semidirect product. Then $(n,h) \sim (hh\inv \cdot n, h)$ in the associated admissible equivalence relation.
\end{proposition}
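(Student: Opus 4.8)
The plan is to exploit the explicit description of the admissible equivalence relation recorded immediately before the statement: two pairs are related precisely when they share the same second coordinate $h$ and satisfy $hh\inv \cdot n_1 = hh\inv \cdot n_2$. Since the pairs $(n,h)$ and $(hh\inv \cdot n, h)$ already agree in their second coordinate, the whole statement reduces to verifying the single identity $hh\inv \cdot n = hh\inv \cdot (hh\inv \cdot n)$.

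To establish this identity I would first apply the second action axiom from \cref{def:action}, namely $hh' \cdot m = h \cdot (h' \cdot m)$, with both $h$ and $h'$ taken to be $hh\inv$. This rewrites the right-hand side as $(hh\inv)(hh\inv) \cdot n$. The key structural fact is then that $hh\inv$ is idempotent: in any inverse semigroup $(hh\inv)(hh\inv) = h(h\inv h h\inv) = hh\inv$, using the defining relation $h\inv h h\inv = h\inv$. Substituting this collapses $(hh\inv)(hh\inv)\cdot n$ back to $hh\inv \cdot n$, which is exactly the left-hand side, completing the argument.

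There is essentially no obstacle here beyond assembling these two ingredients correctly; the content is entirely in the idempotency of $hh\inv$ together with the associativity-type action axiom, which say precisely that the operation $hh\inv \cdot (-)$ is idempotent. If one preferred not to rely on the pre-computed form of the relation, the same conclusion could be reached directly from the definition $k(n)s(h) = k(n')s(h')$: expanding $k(n)s(h)$ via the $\lambda$-semidirect product multiplication and action axiom (1) of \cref{def:action} yields the pair $(hh\inv \cdot n, h)$, and the analogous computation of $k(hh\inv \cdot n)s(h)$ returns the same pair, again by the idempotency of $hh\inv$. Either route makes transparent why the equivalence holds.
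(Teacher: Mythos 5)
Your proof is correct and follows essentially the same route as the paper: reduce to the identity $hh\inv \cdot n = hh\inv \cdot (hh\inv \cdot n)$ and deduce it from the semigroup action axiom together with the idempotence of $hh\inv$. The extra detail you supply (the explicit verification that $hh\inv$ is idempotent, and the alternative direct computation of $k(n)s(h)$) is consistent with, but not required by, the paper's argument.
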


\begin{proof}
    Since the second components agree, we need only verify that $hh\inv \cdot n = hh\inv \cdot hh\inv \cdot n$. This follows from $\alpha$ being an action of semigroups and from the idempotence of $hh\inv$.
\end{proof}

This means that each equivalence class $[n,h]$ has a canonical representative $(hh\inv \cdot n,h)$. The set of these representatives is easily seen to be the underlying set of $S \ltimes_\alpha T$.

In order to determine a compatible action we first consider the associated Schreier retraction. It is easy to see that the first projection $\pi_1 \colon N \ltimes_\alpha H \to N$ is such a map. (Recall that the Schreier retraction need not be monoid homomorphisms). Given this Schreier retraction the compatible action is thus $\beta \colon H \times N \to N$ where
\begin{align*}
    \beta(h,n)  &= \pi_1(s(h)k(n)) \\
                &= \pi_1((hh\inv \cdot 1, h)(n,1)) \\
                &= \pi_1((hh\inv \cdot hh\inv \cdot 1)(h \cdot n),h) \\
                &= (hh\inv \cdot 1)(h \cdot n) \\
                &= (hh\inv \cdot 1)(hh\inv h \cdot n) \\
                &= hh\inv \cdot (1(h\cdot n)) \\
                &= h \cdot n.
\end{align*}

Thus, the compatible action $\beta$ is just the original action $\alpha$.

Recall that from the weakly Schreier perspective the multiplication is given by
\[
    [n_1,h_1][n_2,h_2] = [n_1(h_1 \cdot n_1),h_1h_2]
\]

The element $(n_1(h_1 \cdot n_2),h_1h_2)$ will not in general be the canonical element of its class. Thus, we canonicalise it and arrive at
\begin{align*}
    (h_1h_2(h_1h_2)\inv \cdot (n_1(h_1 \cdot n_2), h_1h_2)
        &= ((h_1h_2(h_1h_2)\inv \cdot n_1)(h_1h_2(h_1h_2)\inv \cdot h_1 \cdot n_2),h_1h_2) \\
        &= ((h_1h_2(h_1h_2)\inv \cdot n_1)(h_1 \cdot (h_2h_2\inv \cdot n_2)),h_1h_2).
\end{align*}

Note that if $(n_2,h_2) \in N \ltimes_\alpha H$, then the expression reduces to $((h_1h_2(h_1h_2)\inv \cdot n_1)(h_1 \cdot n_2),h_1h_2)$ which is precisely the multiplication of $N \ltimes_\alpha H$.

\section{The preorder of $\lambda$-semidirect products}

Since the set of weakly Schreier extensions between monoids $N$ and $H$ has a natural preorder structure, we can now ask what order this induces on the set of $\lambda$-semidirect products when we take $N$ and $H$ to be inverse monoids.

It will be convenient to think in terms of the actions of inverse monoids instead of the $\lambda$-semidirect products themselves. Thus, we consider the preorder induced on the set of actions by the function sending an action to its associated weakly Schreier extension.

This function is not injective as two distinct actions can be mapped to isomorphic weakly Schreier extensions.

\begin{example}
    Let $N$ be an inverse monoid with at least two distinct idempotents $u$ and $u'$ and let $H$ be an inverse semigroup satisfying that $h_1h_2 = 1$ implies $h_1 = 1 = h_2$.
    
    Consider the function $\alpha_u \colon H \times N \to N$ where $\alpha_u(h,n) = u$ whenever $h \ne 1$ and $\alpha_u(1,n) = n$. Because $h_1h_2 = 1$ implies $h_1 = 1 = h_2$ we have that $\alpha_u$ is an action of inverse monoids.
    
    Similarly, consider the action $\alpha_{u'} \colon H \times N \to N$ where $\alpha_{u'}(h,n) = u'$ whenever $h \ne 1$ and $\alpha_{u'}(1,n) = n$.
    
    It is apparent that $\alpha_u \ne \alpha_{u'}$. Furthermore, both actions result in an equivalence relation in which $(n_1,h) \sim (n_2,h)$ for all $n_1,n_2 \in N$ and $h \in H-\{1\}$, and $(n_1,1) \sim (n_2,1)$ if and only if $n_1 = n_2$. The multiplications agree as required, as in both equivalence relations we have that $(\alpha_u(h,n),h) \sim (\alpha_{u'}(h,n),h)$.
\end{example}

\begin{proposition}
    Let $N$ and $H$ be inverse monoids and let $\alpha \colon H \times N \to N$ and $\beta \colon H \times N \to N$ be actions of inverse monoids. Then $\alpha \le \beta$ if and only if for all $n \in N$ and $h \in H$, $\beta(hh\inv,\alpha(h,n)) = \beta(h,n)$.
\end{proposition}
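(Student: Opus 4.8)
The plan is to unfold the meaning of $\alpha \le \beta$ through the two associated objects of $\WAct(H,N)$ and then collapse the resulting two conditions down to the single stated identity. Recall from the preceding subsection that the admissible equivalence relation attached to $N \ltimes_\alpha H$ satisfies $(n,h) \sim_{E_\alpha} (n',h)$ if and only if $\alpha(hh\inv,n) = \alpha(hh\inv,n')$ (this comes from the computation $k(n)s(h) = (hh\inv \cdot n,h)$), and that the compatible action recovered via the Schreier retraction $\pi_1$ is $\alpha$ itself; the analogous statements hold for $\beta$. Hence, by the definition of the order on $\WAct(H,N)$, the relation $\alpha \le \beta$ unfolds into the conjunction of two conditions: \textbf{(1)} $\alpha(hh\inv,n) = \alpha(hh\inv,n')$ implies $\beta(hh\inv,n) = \beta(hh\inv,n')$ for all $n,n',h$; and \textbf{(2)} $\beta(hh\inv,\alpha(h,n)) = \beta(hh\inv,\beta(h,n))$ for all $n,h$.

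The first step is to rewrite condition (2). Using that $\beta$ is an action of inverse semigroups together with the identity $hh\inv h = h$, one simplifies $\beta(hh\inv,\beta(h,n)) = \beta((hh\inv)h,n) = \beta(h,n)$, so condition (2) is \emph{exactly} the stated identity $\beta(hh\inv,\alpha(h,n)) = \beta(h,n)$. This already yields the ``only if'' direction, since $\alpha \le \beta$ in particular entails condition (2).

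For the ``if'' direction it remains to derive condition (1) from the stated identity, and this is the step I expect to be the main obstacle, since condition (1) concerns a refinement of equivalence relations rather than the actions directly. The key observation is that idempotents of an inverse semigroup are self-inverse, so for any $h$ the element $hh\inv$ is idempotent and satisfies $(hh\inv)(hh\inv)\inv = hh\inv$. Applying the stated identity with $hh\inv$ in place of the free element therefore gives $\beta(hh\inv,\alpha(hh\inv,n)) = \beta(hh\inv,n)$ for all $n$. Now, assuming the hypothesis of condition (1), namely $\alpha(hh\inv,n) = \alpha(hh\inv,n')$, we obtain $\beta(hh\inv,n) = \beta(hh\inv,\alpha(hh\inv,n)) = \beta(hh\inv,\alpha(hh\inv,n')) = \beta(hh\inv,n')$, which is precisely the conclusion of condition (1). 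Combining this with the equivalence of condition (2) and the stated identity established above completes the proof.
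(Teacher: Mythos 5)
Your proof is correct and follows essentially the same route as the paper: both arguments identify the stated identity with the action-compatibility half of the order relation via $\beta(hh\inv,\beta(h,n)) = \beta(h,n)$, and both recover the refinement of equivalence relations by substituting the idempotent $hh\inv$ for $h$ in the stated identity. The only difference is presentational — you unfold the order into its two conditions up front rather than arguing the two implications separately.
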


\begin{proof}
    ($\Rightarrow$) Suppose that $\alpha \le \beta$.
    
    Then $(\alpha(h,n),h) \sim_\beta (\beta(h,n),h)$. Unwinding this gives
    \begin{align*}
        \beta(hh\inv,\alpha(h,n))   &= \beta(hh\inv, \beta(h,n)) \\
                                    &= \beta(h,n).
    \end{align*}
    
    ($\Leftarrow$) Suppose that for all $n \in N$ and $h \in H$, $\beta(hh\inv,\alpha(h,n)) = \beta(h,n)$.
    
    First we show that $(n_1,h) \sim_\alpha (n_2,h)$ implies $(n_1,h) \sim_\beta (n_2,h)$. Suppose that $(n_1,h) \sim_\alpha (n_2,h)$. This means that $\alpha(hh\inv,n_1) = \alpha(hh\inv,n_2)$. Thus, making use of our assumption we find
    \begin{align*}
        \beta(hh\inv, n_1)  &= \beta(hh\inv(hh\inv)\inv, \alpha(hh\inv,n_1)) \\
                            &= \beta(hh\inv, \alpha(hh\inv,n_1)) \\
                            &= \beta(hh\inv, \alpha(hh\inv,n_2)) \\
                            &= \beta(hh\inv, n_2).
    \end{align*}
    
    Now we must show that $(\alpha(h,n),h) \sim_\beta (\beta(h,n),h)$. For these to be related we need that $\beta(hh\inv,\alpha(h,n)) = \beta(hh\inv, \beta(h,n))$. By assumption $\beta(hh\inv,\alpha(h,n)) = \beta(h,n)$ and combined with the fact that $\beta(h,n) = \beta(hh\inv,\beta(h,n))$, we obtain the desired equality.
\end{proof}


\section{Artin-glueing-like actions}\label{section:artin-like-actions}

Given an order structure on the set of $\lambda$-semidirect products, it is natural to consider if meets and joins exist. In the section we show that joins exist for a natural class of $\lambda$-semidirect products, reminiscent of Artin glueings of frames.

As alluded to in the introduction, Artin glueings of frames are nothing more than a certain class of $\lambda$-semidirect products between certain meet-semilattices.

\begin{proposition}
    Let $N$ and $H$ be frames considered in the category of monoids and let $f \colon H \to N$ be a monoid homomorphism. Then the Artin glueing $\Gl(f)$ is a $\lambda$-semidirect product of $H$ by $N$.
\end{proposition}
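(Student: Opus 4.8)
The plan is to exhibit an explicit action of inverse monoids $\alpha \colon H \times N \to N$ whose associated $\lambda$-semidirect product is precisely $\Gl(f)$. First recall that a frame, viewed as a monoid under binary meet, is a commutative inverse monoid in which every element is idempotent; hence each $h$ is its own inverse and $hh\inv = h$, while the monoid identity is the top element $1$. Guided by the description of the underlying set of a $\lambda$-semidirect product, namely $\{(n,h) : hh\inv \cdot n = n\}$, and that of the glueing, $\{(n,h) : n \le f(h)\}$, I would define
\[
    \alpha(h,n) = h \cdot n := f(h) \wedge n.
\]
With this choice $hh\inv \cdot n = f(h) \wedge n$, which equals $n$ exactly when $n \le f(h)$, so the two underlying sets coincide on the nose.

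Next I would check that $\alpha$ is an action of inverse monoids, which is forced by the frame axioms. Condition (1) of \cref{def:action} follows from idempotence and commutativity of meet, since $f(h)\wedge(n\wedge n') = (f(h)\wedge n)\wedge(f(h)\wedge n')$; condition (2) is exactly the statement that $f$ preserves meets, as $f(hh')\wedge n = f(h)\wedge f(h')\wedge n = f(h)\wedge(f(h')\wedge n)$; and the extra monoid condition $1 \cdot n = f(1)\wedge n = n$ holds because $f$ preserves the top. None of these is delicate.

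The remaining step, and the one requiring the most care, is to verify that the $\lambda$-product multiplication agrees with the componentwise meet of $\Gl(f)$. Substituting $h\inv = h$ and $hh\inv = h$ into \cref{def:lambdaSemidirectProducts}, the product $(n_1,h_1)(n_2,h_2)$ has second coordinate $h_1 \wedge h_2$ and first coordinate
\[
    ((h_1h_2)(h_1h_2)\inv \cdot n_1)(h_1 \cdot n_2) = (f(h_1)\wedge f(h_2)\wedge n_1)\wedge(f(h_1)\wedge n_2) = f(h_1)\wedge f(h_2)\wedge n_1 \wedge n_2.
\]
The key observation is that, for elements of the glueing, $n_1 \le f(h_1)$ and $n_2 \le f(h_2)$, so $n_1 \wedge n_2 \le f(h_1)\wedge f(h_2)$ and the factor $f(h_1)\wedge f(h_2)$ is absorbed, leaving $n_1 \wedge n_2$. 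Thus the product is $(n_1\wedge n_2,\, h_1\wedge h_2)$, exactly the componentwise meet, and one checks the result indeed lies in the glueing since $n_1\wedge n_2 \le f(h_1\wedge h_2)$. Since both structures share the same underlying set, the same identity $(1,1)$ (the top of the glueing), and now the same multiplication, they coincide as monoids, establishing $\Gl(f) = N \ltimes_\alpha H$. I expect the only genuine pitfall to be the faithful translation of the inverse-semigroup notation, especially the expressions $hh\inv$ and $(h_1h_2)(h_1h_2)\inv$, into meet-semilattice language; once that translation is pinned down, every verification is dictated by the frame axioms and the meet-preservation of $f$.
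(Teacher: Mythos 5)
Your proposal is correct and follows essentially the same route as the paper: you define the same action $\alpha(h,n) = f(h) \wedge n$, verify the action axioms from meet-preservation of $f$ and idempotence, identify the underlying sets via $f(h)\wedge n = n \iff n \le f(h)$, and reduce the $\lambda$-product multiplication to the componentwise meet by absorbing $f(h_1)\wedge f(h_2)$. The only cosmetic difference is that you make the translation $hh\inv = h$ explicit up front, which the paper handles with a brief remark about inverses and idempotence in meet-semilattices.
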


\begin{proof}
    The action corresponding to $\Gl(f)$ is given by $\alpha(h,n) = f(h) \wedge n$. Let us begin by confirming that this is an action of inverse monoids.
    
    It is clear that $\alpha(1,n) = n$ as $f$ preserves the identity. Next observe
    \begin{align*}
        \alpha(h,n \wedge n')   &= f(h) \wedge n \wedge n' \\
                                &= f(h) \wedge n \wedge f(h) \wedge n' \\
                                &= \alpha(h,n) \wedge \alpha(h,n').
    \end{align*}
    
    Finally consider
    \begin{align*}
        \alpha(h \wedge h', n)  &= f(h \wedge h') \wedge n \\
                                &= f(h) \wedge f(h') \wedge n \\
                                &= \alpha(h, f(h') \wedge n) \\
                                &= \alpha(h, \alpha(h',n)).
    \end{align*}
    
    Thus, it remains only to show that $N \ltimes_\alpha H = \Gl(f)$.
    
    Since the inverse of an element in a meet semilattice is itself and because of idempotence, we have that the elements of $N \ltimes_\alpha H$ are those pairs $(n,h)$ in which $n = f(h) \wedge n$. These are precisely the pairs in which $n \le f(h)$ and so $N \ltimes_\alpha H$ and $\Gl(f)$ agree on elements.
    
    Using the same properties of meet-semilattices we see that the multiplication in $N \ltimes_\alpha H$ is given by
    \begin{align*}
        (n,h)(n',h')    &= ((f(h) \wedge f(h') \wedge n) \wedge (f(h) \wedge n'), h \wedge h') \\
                        &= (f(h) \wedge n \wedge f(h') \wedge n', h \wedge h') \\
                        &= (n \wedge n', h \wedge h').
    \end{align*}
    
    This coincides with the multiplication of $\Gl(f)$ and so we are done.
\end{proof}

As discussed in \cite{faul2019artin}, if $N$ and $H$ are frames and $f,g \colon H \to N$ are monoid homomorphisms, then $\Gl(f \wedge g)$ is the join of $\Gl(f)$ and $\Gl(g)$ in the order structure on Artin glueings. In fact, as we shall see, $\Gl(f \wedge g)$ is the join of $\Gl(f)$ and $\Gl(g)$ in $\WSExt(H,N)$.

Inspired by the above, we would like to consider actions $\alpha$ of inverse monoids such that $\alpha(h,n) = f(h) \cdot n$ where $f$ is some function from $H$ into $N$. The condition that $\alpha$ be an action precludes many functions $f$ from serving this purpose. It is sufficient for $f$ to factor through the central idempotents of $S$.

\begin{proposition}\label{prop:artin-like-action}
    Let $H$ and $N$ be inverse monoids and let $f \colon H \to E(N) \cap Z(N)$ be a monoid homomorphism into the central idempotents of $N$, where $E(N)$ denotes the idempotents of $N$ and $Z(N)$ the central elements. Then $\alpha(h,n) = f(h) \cdot n$ is an action of inverse monoids.
\end{proposition}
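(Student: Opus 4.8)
The plan is to verify directly that $\alpha(h,n) = f(h)n$, where $f(h)n$ denotes the product in $N$, satisfies the three defining conditions of an action of inverse monoids: conditions (1) and (2) of \cref{def:action} together with the unit condition of \cref{def:actionOfInverseMonoids}. Since $\alpha$ is assembled from the multiplication of $N$ and the homomorphism $f$, each condition should reduce to a short algebraic manipulation. I expect two of the three to follow purely formally from $f$ being a homomorphism, with only one genuinely requiring that $f$ lands in the central idempotents.

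First I would dispatch the unit condition. Because $f$ is a monoid homomorphism it preserves the identity, so $f(1) = 1$ and hence $\alpha(1,n) = f(1)n = n$ for all $n \in N$, as required. Next I would treat condition (2), compatibility with multiplication in $H$: unwinding both sides gives $\alpha(hh',n) = f(hh')n = f(h)f(h')n$ by the homomorphism property, while $\alpha(h,\alpha(h',n)) = f(h)\bigl(f(h')n\bigr)$, and the two agree by associativity in $N$. This step uses only that $f$ is a homomorphism; neither centrality nor idempotence plays any role.

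The remaining condition (1), distribution over multiplication in $N$, is where the hypothesis on $f$ does the work and is the step I expect to be the crux. Expanding the right-hand side yields $\alpha(h,n)\alpha(h,n') = f(h)\,n\,f(h)\,n'$. Here I would use that $f(h)$ is central to move the second copy of $f(h)$ past $n$, obtaining $f(h)f(h)\,nn'$, and then use that $f(h)$ is idempotent to collapse the two copies into one, giving $f(h)\,nn' = \alpha(h,nn')$. This matches the left-hand side $\alpha(h,nn')$ and completes the verification. The only subtlety is invoking the two properties of $f(h)$ in the right order --- centrality to bring the idempotents together, idempotence to merge them --- but once the product is written out the computation is immediate. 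It is also worth remarking that $E(N) \cap Z(N)$ is genuinely a submonoid of $N$ (the identity is a central idempotent, and a product of central idempotents is again central and, by commutativity, idempotent), so the hypothesis that $f$ is a monoid homomorphism into it is sensible.
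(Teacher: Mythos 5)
Your proof is correct and follows essentially the same route as the paper's: the unit and $H$-multiplication conditions fall out of $f$ being a homomorphism, and the distribution over multiplication in $N$ uses centrality to bring the two copies of $f(h)$ together and idempotence to merge them (the paper runs this last computation from the left-hand side rather than the right, but it is the identical argument). Your closing remark that $E(N)\cap Z(N)$ is a submonoid is a sensible sanity check not spelled out in the paper.
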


\begin{proof}
    For $\alpha(h,n_1n_2)$ we have
    \begin{align*}
        \alpha(h,n_1n_2)    &= f(h) \cdot n_1n_2 \\
                            &= f(h)f(h) \cdot n_1n_2 \\
                            &= f(h)n_1 \cdot f(h)n_2 \\
                            &= \alpha(h,n_1)\alpha(h,n_2),
    \end{align*}
    which makes use of the fact that $f(h)$ is a central idempotent.
    
    Next we must check that $\alpha(h_1h_2,n) = \alpha(h_1,\alpha(h_2,n))$. Here we consider
    \begin{align*}
        \alpha(h_1h_2,n)    &= f(h_1h_2) \cdot n \\
                            &= f(h_1)f(h_2) \cdot n \\
                            &= f(h_1) \cdot \alpha(h_2,n) \\
                            &= \alpha(h_1, \alpha(h_2,n)).
    \end{align*}
    
    The final condition follows easily with $\alpha(1,n) = f(1) \cdot n = 1 \cdot n = n$.
\end{proof}

\begin{definition}
    Let $H$ and $N$ be inverse monoids and $f \colon H \to E(N) \cap Z(N)$ a monoid homomorphism into the central idempotents of $N$. Then we call the action $\alpha_f(h,n) = f(h) \cdot n$ the \emph{Artin-like-action} corresponding to $f$.
\end{definition}

The $\lambda$-semidirect products resulting from Artin-like-actions have many nice properties. For instance, when interpreted as a weakly Schreier extensions, the canonical element of each equivalence class can be easily seen to be the smallest element in each class.

Furthermore, just as in the frame setting, we can combine two actions of this form in a natural way.

\begin{proposition}
    Let $N$ and $H$ be inverse semigroups and let $\alpha_f, \alpha_g$ be Artin-like-actions corresponding to the maps $f,g \colon H \to E(N) \cap Z(N)$ respectively. Then the action $\gamma \colon H \times N \to N$ given by $\gamma(h,n) = f(h)g(h)n$, is an Artin-like-action.
\end{proposition}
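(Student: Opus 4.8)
The plan is to reduce everything to \cref{prop:artin-like-action} by showing that the pointwise product map $fg \colon H \to N$, defined by $(fg)(h) = f(h)g(h)$, is again a monoid homomorphism landing in $E(N) \cap Z(N)$. Once this is established, $\gamma(h,n) = (fg)(h) \cdot n$ is an Artin-like-action essentially by definition, since it is the Artin-like-action corresponding to $fg$.

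First I would check that $f(h)g(h)$ is a central idempotent for every $h \in H$. Centrality is immediate because the central elements $Z(N)$ are closed under multiplication, so a product of two central elements is again central. For idempotence I would compute $(f(h)g(h))(f(h)g(h))$ and use centrality to commute the inner $g(h)$ past $f(h)$, obtaining $f(h)f(h)g(h)g(h) = f(h)g(h)$ by the idempotence of each factor. Hence $f(h)g(h) \in E(N) \cap Z(N)$.

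Next I would verify that $fg$ is a monoid homomorphism. Unit preservation is clear, as $(fg)(1) = f(1)g(1) = 1 \cdot 1 = 1$. For multiplicativity I would expand $(fg)(h_1h_2) = f(h_1)f(h_2)g(h_1)g(h_2)$, using that $f$ and $g$ are themselves homomorphisms, and then commute $f(h_2)$ past $g(h_1)$ by centrality to regroup this as $(f(h_1)g(h_1))(f(h_2)g(h_2)) = (fg)(h_1)(fg)(h_2)$.

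With $fg$ shown to be a homomorphism into $E(N) \cap Z(N)$, \cref{prop:artin-like-action} immediately yields that $\gamma = \alpha_{fg}$ is an action of inverse monoids, and it is the Artin-like-action corresponding to $fg$, as required. The only subtlety — and the one place where the hypothesis is genuinely doing the work — is the repeated commuting of factors needed for both idempotence and multiplicativity; without $f(h), g(h) \in Z(N)$ neither computation would close up, so I expect the centrality bookkeeping to be the main (though routine) point to get right.
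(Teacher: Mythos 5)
Your proof is correct and follows essentially the same route as the paper: show that the pointwise product $f \cdot g$ is a monoid homomorphism into $E(N) \cap Z(N)$, using centrality to commute $f(h_2)$ past $g(h_1)$, and then invoke \cref{prop:artin-like-action}. If anything you are slightly more careful than the paper, which leaves implicit the easy check that $f(h)g(h)$ is again a central idempotent.
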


\begin{proof}
    We claim that $\gamma$ corresponds to $\alpha_{f \cdot g}$, where $f \cdot g(h) = f(h)g(h)$. It is clear that $f\cdot g$ preserves the identity. To see that it preserves multiplication we make use of the fact that both $f$ and $g$ map into the centre of $N$. Thus we have
    \begin{align*}
        f \cdot g(h_1h_2)  &= f(h_1h_2)g(h_1h_2) \\
                &= f(h_1)f(h_2)g(h_1)g(h_2) \\
                &= f(h_1)g(h_1)f(h_2)g(h_2) \\
                &= f \cdot g(h_1)f \cdot g(h_2)
    \end{align*}
    and can conclude that $f \cdot g$ is a monoid homomorphism as required.
    
    We then invoke \cref{prop:artin-like-action} and we are done.
\end{proof}

\begin{proposition}
    Let $N$ and $H$ be inverse monoids and let $f,g \colon H \to E(N) \cap Z(N)$ be monoid homomorphisms into the central idempotents of $N$. Then the join of $\alpha_f$ and $\alpha_g$ exists in $\WSExt(H,N)$ and is equal to $\alpha_{f \cdot g}$.
\end{proposition}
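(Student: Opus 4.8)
The plan is to transport the problem along the equivalence between $\WSExt(H,N)$ and $\WAct(H,N)$: since that equivalence identifies the two preorders, it suffices to show that $\alpha_{f \cdot g}$ is a least upper bound of $\alpha_f$ and $\alpha_g$ in the induced preorder on actions, using the characterization that $\alpha \le \beta$ if and only if $\beta(hh\inv,\alpha(h,n)) = \beta(h,n)$ for all $n,h$. First I would record two facts used throughout: because a homomorphism of inverse monoids preserves inverses and each $f(h),g(h)$ is idempotent, we have $f(hh\inv) = f(h)f(h\inv) = f(h)f(h) = f(h)$ and likewise $g(hh\inv) = g(h)$; and because $\alpha_{f\cdot g}$ is an Artin-like action (by the preceding proposition) all the factors $f(h),g(h)$ are central idempotents, so they commute and square to themselves.

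For the upper-bound half I would verify $\alpha_f \le \alpha_{f \cdot g}$ by the direct computation
\[
  \alpha_{f\cdot g}(hh\inv, \alpha_f(h,n)) = f(hh\inv)g(hh\inv)f(h)n = f(h)g(h)f(h)n = f(h)g(h)n = \alpha_{f\cdot g}(h,n),
\]
which uses $f(hh\inv)=f(h)$, $g(hh\inv)=g(h)$, centrality and idempotence. The inequality $\alpha_g \le \alpha_{f\cdot g}$ then follows by the identical computation with the roles of $f$ and $g$ exchanged.

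The substance is the least-upper-bound half: given any action $\beta$ of inverse monoids with $\alpha_f \le \beta$ and $\alpha_g \le \beta$, I must prove $\alpha_{f\cdot g} \le \beta$, i.e.\ $\beta(hh\inv, f(h)g(h)n) = \beta(h,n)$. The two hypotheses unwind to $\beta(hh\inv, f(h)m) = \beta(h,m)$ and $\beta(hh\inv, g(h)m) = \beta(h,m)$ for all $m$. Applying the first with $m = g(h)n$ immediately collapses the target to $\beta(h, g(h)n)$, so the whole statement reduces to the claim $\beta(h, g(h)n) = \beta(h,n)$. I expect this claim to be the main obstacle, since the available identity for $g$ carries first argument $hh\inv$ rather than $h$. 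The trick I would use is to first establish $\beta(h, g(h)) = \beta(h,1)$: applying $\alpha_g \le \beta$ with $m=1$ gives $\beta(hh\inv, g(h)) = \beta(h,1)$, while applying it with $m=g(h)$ and using idempotence of $g(h)$ gives $\beta(hh\inv, g(h)) = \beta(h, g(h))$, so comparing the two yields the claim. Multiplicativity of $\beta$ then finishes it via $\beta(h, g(h)n) = \beta(h,g(h))\beta(h,n) = \beta(h,1)\beta(h,n) = \beta(h, 1\cdot n) = \beta(h,n)$.

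Chaining the two reductions gives $\beta(hh\inv, f(h)g(h)n) = \beta(h, g(h)n) = \beta(h,n)$, hence $\alpha_{f\cdot g} \le \beta$. Together with the two upper-bound inequalities this shows $\alpha_{f\cdot g}$ is a least upper bound, and since joins in a preorder are determined up to the induced equivalence, I would conclude that the join of $\alpha_f$ and $\alpha_g$ in $\WSExt(H,N)$ exists and is $\alpha_{f\cdot g}$.
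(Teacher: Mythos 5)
Your two upper-bound computations and the reduction $\beta(h,g(h)n)=\beta(h,n)$ are correct and are a clean way to handle comparisons between $\lambda$-semidirect products, but the least-upper-bound half has a genuine gap: you only quantify over upper bounds $\beta$ that are themselves actions of inverse monoids. The proposition asserts that $\alpha_{f\cdot g}$ is the join in $\WSExt(H,N)$, whose objects correspond to arbitrary pairs $(E,[\beta])$ of an admissible equivalence relation and a compatible action; a weakly Schreier extension sitting above both $\alpha_f$ and $\alpha_g$ need not arise from any inverse-monoid action, and for such an upper bound the characterization $\alpha\le\beta \iff \beta(hh\inv,\alpha(h,n))=\beta(h,n)$ that your whole argument runs through is simply not available. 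So what you have proved is that $\alpha_{f\cdot g}$ is the join within the sub-preorder of $\lambda$-semidirect products, which is strictly weaker than the stated claim.

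To close the gap you must verify the two clauses of the order on $\WAct(H,N)$ against a general upper bound $(E,[\beta])$, which is what the paper does. From $\alpha_f\le(E,[\beta])$ and $\alpha_g\le(E,[\beta])$ one first extracts $(n,h)\sim_E(f(h)n,h)$ and $(n,h)\sim_E(g(h)n,h)$ (using that $(n,h)\sim_{\alpha_f}(f(h)n,h)$ by idempotence, and that $\sim_{\alpha_f}$ is contained in $\sim_E$). Then admissibility condition (3) (stability of $\sim_E$ under left multiplication) lets you chain
\[
(n_1,h)\sim_E(f(h)n_1,h)\sim_E(f(h)g(h)n_1,h)=(f(h)g(h)n_2,h)\sim_E(n_2,h)
\]
whenever $f(h)g(h)n_1=f(h)g(h)n_2$, giving the inclusion of equivalence relations, and a similar one-step chain gives $(f(h)g(h)n,h)\sim_E(\beta(h,n),h)$. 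Your identity $\beta(h,g(h))=\beta(h,1)$ is the shadow of this argument in the special case where $E$ is the relation attached to an inverse-monoid action, but the general case cannot be reduced to a single equation in $\beta$ and must be argued at the level of $\sim_E$.
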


\begin{proof}
    First we show that $\alpha_{f \cdot g}$ is larger than $\alpha_f$ and $\alpha_g$ in $\WSExt(H,N)$.
    
    If $(n_1,h) \sim_{\alpha_f} (n_2,h)$ then $f(h)n_1 = f(h)n_2$. Thus, $g(h)f(h)n_1 = g(h)f(h)n_2$ and since $g(h)$ is central, we have $fg(h)n_1 = fg(h)n_2$. This means that $(n_1,h) \sim_{\alpha_{f \cdot g}} (n_2,h)$ as required. This same argument gives that $(n_1,h) \sim_{\alpha_g} (n_2,h)$ implies that $(n_1,h) \sim_{\alpha_{f \cdot g}} (n_2,h)$.
    
    The final condition to check is that $(g(h)n,h) \sim_{\alpha_{f \cdot g}} (fg(h)n,h) \sim_{\alpha_{f \cdot g}} (f(h)n,h)$. This follows because $f(h)$ and $g(h)$ are both central and idempotent.
    
    To show that $\alpha_{f \cdot g}$ is the join suppose we have a weakly Schreier extension $(E,\beta)$ larger than $\alpha_f$ and $\alpha_g$, but smaller than $\alpha_{f \cdot g}$. Since $(E, \beta)$ is smaller than $\alpha_{f \cdot g}$, we have that if $(n_1,h) \sim_E (n_2,h)$ then $(n_1,h) \sim_{\alpha_{f \cdot g}} (n_2,h)$. We will show that $(E,\beta)$ being larger than $\alpha_f$ and $\alpha_g$
    means that $(n_1,h) \sim_{\alpha_{f \cdot g}} (n_2,h)$ implies that $(n_1,h) \sim_{E} (n_2,h)$.
    
    We know that $(g(h)n,h) \sim_E (n,h) \sim_E (f(h)n,h)$ for all $n \in N$ and $h \in H$. Now suppose that $(n_1,h) \sim_{\alpha_{f \cdot g}} (n_2,h)$. This means that $f(h)g(h)n_1 = f(h)g(h)n_2$. Now simply consider
    \begin{align*}
        (n,h)   &\sim_E (f(h)n_1,h) \\
                &\sim_E (f(h),1)(n_1,h) \\
                &\sim_E (f(h),1)(g(h)n_1,h) \\
                &\sim_E (f(h)g(h)n_1,h) \\
                &\sim_E (f(h)g(h)n_2,h) \\
                &\sim_E (n_2,h).
    \end{align*}
    
    Thus the equivalence relations are equal and so $(E,\beta) = \alpha_{f \cdot g}$.
\end{proof}

Notice that this gives that $\Gl(f \wedge g) = \Gl(f) \vee \Gl(g)$ in $\WSExt(H,N)$.






\bibliographystyle{abbrv}
\bibliography{bibliography}
\end{document}